\newcommand\DOI[1]{{\tt DOI:\burlalt{#1}{https://doi.org/#1}}}
\def\d{\mathbf{d}}
\def\D{\mathbf{D}}
\def\k{\mathbf{k}}
\theoremstyle{plain}
\newtheorem{theorem}{Theorem}[section]
\newtheorem*{theorem*}{Theorem}
\newtheorem{lemma}[theorem]{Lemma}
\newtheorem{corollary}[theorem]{Corollary}
\newtheorem{proposition}[theorem]{Proposition}
\theoremstyle{definition}
\newtheorem{definition}[theorem]{Definition}
\title{\bfseries New results on graph matching from degree preserving growth\footnote{PLE
and TRM were supported in part by the National Research, Development and
Innovation Office --- NKFIH grant SNN 135643, K 132696. SRK and ZT were
supported by the NSF grant IIS-1724297. \\ \dag\ Equal contributions}}
\author[1]{P\'eter L. Erd\H{o}s}
\author[2\dag]{Shubha R. Kharel}
\author[1\dag]{Tam\'as R\'obert Mezei}
\author[2]{Zolt\'an~Toroczkai}
\affil[1]{\small Dept.\ of Combinatorics and applications, Alfr\'ed R\'enyi Inst.\
of Math.\ (HUN-REN),\protect\\ Re\'altanoda utca 13--15, H-1053 Budapest,
Hungary.\protect\\ \texttt{<erdos.peter, mezei.tamas.robert>@renyi.hun-ren.hu}}
\affil[2]{\small Department of Physics, 225 Nieuwland Science Hall, Notre Dame,
IN 46556, USA.\protect\\\texttt{<torol>@nd.edu, shubha.raj.kharel@gmail.com}}
\begin{document}

\maketitle
\begin{abstract}
The recently introduced \emph{Degree Preserving Growth} model (Nature Physics, \DOI{10.1038/s41567-021-01417-7}) uses matchings to insert new vertices of prescribed degrees into the current graph of an ever-growing graph sequence. The process depends both on the size of the largest  available matchings, which is our focus here, as well as on the actual choice of the matching.

First we show that the question whether a graphic degree sequence, extended with a new degree $2\delta$ remains graphic is closely related to the available matchings in the realizations of the sequence. Namely we  prove that the extension problem is equivalent to the existence of a realization of the original degree sequence with a matching of size $\delta$.

Second we present lower bounds for the \emph{forcible matching number} of degree sequences. This number is the size of the maximum matchings in any realization of the degree sequence. We  then study bounds on the size of maximal matchings in \emph{some} realizations of the sequence, known as the \emph{potential matching number}. We also estimate the  minimum size of both the maximal and the maximum matchings, as determined by the degree sequence, independently of graphical realizations. Along this line we answer a question raised by Biedl, Demaine \emph{et al.} (\DOI{10.1016/j.disc.2004.05.003}).

\medskip

\noindent\emph{Keywords:} degree sequence extension; matching number; degree-preserving growth (DPG); lower bound on the matching number

\noindent\emph{Classification}  05C70, 05C82
\end{abstract}

\section{Introduction}\label{sec:intro}

The recently introduced  degree-preserving growth (DPG) network evolution dynamics (see~\cite{kharel_degree 2021} and~\cite{erdos_degree 2021}) chooses $k$ pairwise independent edges (a \emph{$k$-matching}, where $k$ may be chosen dynamically), deletes those edges, and connects all the end points of the original edges with a new vertex $v$. This operation is also called \emph{pinching} of edges by the incoming vertex $v$.  The degree of the new vertex is $d(v)=2k$, and the process  keeps the degrees of all the original vertices unchanged, which is the reason for the name \emph{degree-preserving network growth} (DPG). This property is in sharp contrast with previous network growth dynamic models (such as the preferential attachment model, or the Chung-Lu model for generating scale-free networks) where some of the existing nodes must increase their degrees, whenever a new node attaches to them. (Let's remark that the incoming node's degree can also be odd (see \cite{erdos_degree 2021}), but we will not pursue this issue here.)

Clearly, the process depends on the degree $k$ of the incoming vertex, and also on the actual choice of the matchings to be used. For example, the chosen matching may have always the same size, providing regular graph sequence, or the chosen matching can be always maximum ones, providing center-periphery like networks (see  \cite{kharel_degree 2021}). The choice of the matching of given size can also vary (as there can be several matching of a given size, in general). We will not pursue these  aspects of the problem here.

To finish the short description of the DPG  process, it is enough to mention that different strategies for these choices provide different dynamics. The DPG mechanism defines a whole family of network growth models, proving useful in network modeling applications \cite{kharel_degree 2021}. For example it was proved that it can simulate outputs of several, previously known network growth dynamics, including generating scale-free networks, and furthermore, it  can generate most real-world networks precisely, i.e., the exact graph, edge-by-edge  \cite{kharel_degree 2021}. Interestingly, the latter, numerically observed property is in contrast with the fact (proven in \cite{erdos_degree 2021}), that DPG construction is NP-hard, in general. Finally, it is worth mentioning that the DPG process has an interesting application to number theory \cite{prime}.

\bigskip\noindent
In  this  paper we focus  on the study of the available matchings in the different realizations of  a given degree sequence. The largest size of a matching in finite graphs is called the graph's \emph{matching number}, denoted by $\nu(G)$, and it a graph invariant. It is well-known that there exist polynomial-time algorithms  that find a maximum size  matching in any given \emph{simple} graph $G$  (for example the ``blossom'' algorithm of  Edmonds~\cite{edmonds}). However, it is  difficult to estimate  the matching number analytically on the basis of several commonly used graph parameters. For example, two graphs with the same degree sequence may have very different matching numbers (consider the disjoint union of $2k$ triangles and of the cycle $C_{6k}$ of length $6k$, i.e., $\nu=2k$ vs. $\nu=3k$), even by orders of magnitude (see Section~\ref{sec:conclusion}). Furthermore,  local operations (such as manipulating one vertex and its neighbors) on the graph may change this number significantly. For example, in the friendship graph $F_k$ (which is formed by $k$ copies of $K_3$, overlapping in one vertex), the matching number is $\nu(F_k)=k$. However, after one DPG step (using the edges of the maximum matching) we get the complete bipartite graph $K_{2,2k}$ with  $\nu(K_{2,2k}) = 2$.

\medskip\noindent
The goal of this paper is twofold. On the one hand, it is to study the maximal possible matching number among all the possible graphical realizations of a degree sequence (potential matching number), see \Cref{sec:matching}. On the other hand, in \Cref{sec:force} we study the size of maximal matchings of arbitrary graphs with given degree sequences (forcible matching number). These quantities may help to design new graph growth dynamics to achieve networks with predefined structural properties. We will also answer a question,  first raised by Biedl, Demaine \emph{et al.} in~\cite{biedl_tight_2004} (see \Cref{tm:maxbound}). \Cref{sec:maximum} further improves on some of the results in \Cref{sec:force}.

\medskip\noindent
The topic of \emph{potentially} or \emph{forcibly} P-graphic properties is widely studied, but as far as we are aware, not in the context of matchings. For an early survey paper about the general notions see~Rao, \cite{rao_survey_1981}. These notions had most likely been inspired by the work of C.\ Nash-Williams, who introduced the terms of potentially Hamiltonian and forcibly Hamiltonian, already in 1969 (see~\cite{NW69}). However, this article has not been surveyed in~\cite{rao_survey_1981}.

\bigskip\noindent
Next we  fix the exact notion of a \emph{matching} as it will be applied in this paper. Let $G$ be a simple graph.  Consider a non-negative integer-valued function $f(v)$, defined on $V(G)$. A subgraph $F \subseteq G$ is  an $f$-factor if $\deg_{F}(v)=f(v)$ for all $v\in V(G)$. For vertices with $f(v)=0$, the vertex does not belong to $F$. A \emph{1-factor} is a spanning subgraph with all degree-one vertices and thus it forms a \emph{perfect matching} in $G$. When the $f$-values
vary between 0 and 1, then the $f$-factor is a (partial) matching or a $(0,1)$-factor. For simplicity we will use the slightly ambiguous notion of $\delta$-matching for $(0,1)$-factors of $2\delta$ ones in $f$.

\medskip

For a given graph $G$, the existence of a $1$-factor is fully determined by
Tutte's 1-Factor theorem (see~\cite{tutte}). Deciding whether
there exists an $f$-factor in $G$, in general, is also relatively easy: the
problem is equivalent to finding a $1$-factor in a graph derived from $G$ using
\emph{Tutte-gadgets} (see~\cite{tutte}).

\medskip

However, the potentially and forcibly $f$-factor problems in general are not well-understood. The forcibly 1-factor graphic problem has been solved for a long time (see Bondy and Chv\'atal, \cite{bondy_method_1976}). Furthermore, J.\ Petersen proved in 1891 that every even-degree regular graph contains a 2-factor, and thus such degree sequences are forcibly 2-factor graphic (see~\cite{petersen}). In 2012 Bauer, Broersma, van den Heuvel, Kahl and Schmeichel proved a necessary and sufficient condition for degree sequences to be forcibly 2-factor graphic (see~\cite{bauer_degree_2012}). However, the conditions are very complex.

As we already mentioned, the $(0,1)$-factor problem is not well studied. In this paper we provide novel results on both the potentially and forcibly $(0,1)$-factor problems.

\section{Potentially \texorpdfstring{$(0,1)$}{(0,1)}-factor graphical degree sequences}\label{sec:matching}

A sequence $\d$ of $n$ positive integers ($n$-sequence) is \emph{graphic}, if there is a graph with a vertex labelling such that the degree sequence of the graph is $\d$. Such a graph is called a \emph{realization} of the graphic sequence $\d$. Let us recall an algorithmic characterization of graphic sequences, introduced independently by Havel and Hakimi:
\begin{theorem}[Havel, 1955 \cite{H55} and Hakimi, 1962 \cite{H62}]\label{tm:HH}
    There exists a simple graph with degree sequence $d_1 > 0$, $d_2 \ge \ldots \ge d_n \geq 0$ if and only if  the degree sequence $\d'= d_2-1,\ldots,d_{d_1+1}-1,d_{d_1+2},\ldots,d_n$ is graphic.
\end{theorem}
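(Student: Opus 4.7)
The plan is to prove the two directions separately, with the easy direction first and an edge-swapping argument for the hard one.

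For the ``if'' direction, I would assume $\d'$ has a realization $G'$ on vertex set $\{v_2,\dots,v_n\}$ where $v_i$ has degree $d_i'$, then construct a realization of $\d$ by adjoining a new vertex $v_1$ and joining it to $v_2,\dots,v_{d_1+1}$. The degree of $v_1$ becomes $d_1$, the degrees of $v_2,\dots,v_{d_1+1}$ each increase by $1$ (from $d_i-1$ to $d_i$), and the degrees of $v_{d_1+2},\dots,v_n$ are unchanged, yielding a realization of $\d$.

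For the ``only if'' direction, the plan is to take a realization $G$ of $\d$, label the vertices $v_1,\dots,v_n$ with $\deg(v_i)=d_i$, and show that $G$ can be transformed, via a sequence of degree-preserving $2$-switches, into a realization $G^*$ in which $v_1$ is adjacent to exactly $v_2,\dots,v_{d_1+1}$. Once this is achieved, deleting $v_1$ from $G^*$ produces a realization of $\d'$. The key measure of progress is the number of indices $i\in\{2,\dots,d_1+1\}$ with $v_1v_i\in E(G)$; I would show that whenever this count is less than $d_1$, a suitable swap strictly increases it.

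The main step is the swap argument. Suppose there exist $i\le d_1+1$ with $v_1v_i\notin E(G)$ and $j> d_1+1$ with $v_1v_j\in E(G)$. Since $d_i\ge d_j$ and $v_j$ has $v_1$ as a neighbor while $v_i$ does not, one has $|N(v_i)\setminus\{v_1\}|=d_i\ge d_j>d_j-1=|N(v_j)\setminus\{v_1\}|$, so there must be a vertex $w\in N(v_i)\setminus(N(v_j)\cup\{v_1,v_j\})$. Replacing the edges $v_1v_j$ and $v_iw$ by $v_1v_i$ and $v_jw$ preserves all degrees (this is a standard $2$-switch) and strictly increases the number of ``correctly placed'' edges at $v_1$. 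Iterating this finitely many times produces the desired $G^*$.

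The main obstacle, and the only nontrivial point, is ensuring the existence of the witness $w$; this boils down to the degree comparison $d_i\ge d_j$ guaranteed by the arrangement of $\d$, together with the asymmetric role played by $v_1$ (adjacent to $v_j$ but not $v_i$). After that, termination is immediate because the chosen potential function is integer-valued and bounded above by $d_1$.
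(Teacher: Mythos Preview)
The paper does not supply its own proof of this theorem: it is quoted as a classical result with a reference to West's textbook, and later (in the lemma labeled \texttt{true-HH}) the paper alludes to ``the main step in the original proof of Havel's theorem''---exactly the single-swap argument you outline---but explicitly leaves that proof ``to the diligent reader.'' Your plan is the standard textbook argument and is correct in outline.

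One small point to tighten in your swap step: from $|N(v_i)\setminus\{v_1\}|=d_i>d_j-1=|N(v_j)\setminus\{v_1\}|$ you immediately claim a witness $w\in N(v_i)\setminus(N(v_j)\cup\{v_1,v_j\})$, but the size comparison alone only rules out $N(v_j)\cup\{v_1\}$, not $v_j$ itself. This is easy to patch. If $v_iv_j\notin E(G)$ then $v_j\notin N(v_i)$ and there is nothing to do. If $v_iv_j\in E(G)$ then $v_i\in N(v_j)\setminus\{v_1\}$ while $v_i\notin N(v_i)$, so $|N(v_i)\cap (N(v_j)\setminus\{v_1\})|\le d_j-2$, giving $|N(v_i)\setminus(N(v_j)\cup\{v_1,v_j\})|\ge d_i-(d_j-2)-1\ge 1$. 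Alternatively, when $d_i=d_j$ you may simply exchange the labels of $v_i$ and $v_j$, which already increases your potential function without any edge swap.
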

\noindent
\Cref{tm:HH} also provides the simple quadratic HH-algorithm to generate an actual graph with the given degree sequence (if there is any). At the same time it is interesting to remark that the HH-algorithm cannot construct all realizations of a graphic sequence (see, for  example \cite[Figure~1]{kim_degree-based_2009}). To be able to do so, further considerations are necessary, such as the star-constrained graphicality theorem based algorithm~\cite[Theorem~6]{kim_degree-based_2009}.

\subsection{Extended degree sequences and matchings}\label{sec:supp}
Next, we study the conditions under which we can extend a graphic degree sequence with an even degree. Let us consider a non-increasing graphic sequence $\d$ and an integer $\Delta > 0$. We want to find necessary and sufficient conditions to ensure that the \emph{extended} degree sequence $\d \circ\Delta = d_1,\ldots, d_n,\Delta $ is graphic. (By the handshake lemma it is clear that $\Delta$ should be even.) It is also clear that \Cref{tm:HH} provides the following statement.
\begin{lemma}\label{lm:1}
    Given a non-increasing graphic sequence $\d$ and a positive integer $\delta
    \leq n/2$, the extended sequence $\mathbf{D}(2\delta) =  \mathbf{d} \circ (2\delta)$ is graphic if and only if the sequence $ \d' := d_1-1,\ldots,d_{2\delta}-1, d_{2\delta+1},\ldots,d_n$ is also graphic.
\end{lemma}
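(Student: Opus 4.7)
The plan is to prove the lemma by a direct application of Havel--Hakimi (\Cref{tm:HH}) targeted at the newly adjoined vertex of degree $\delta$. Since \Cref{tm:HH} as stated only removes the vertex of maximum degree, which need not coincide with the new vertex when $\delta < d_1$, I would reproduce the standard HH-style edge-swap argument directly at the new vertex, rather than invoke \Cref{tm:HH} as a black box.

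For the easy direction ($\Leftarrow$), given a realization $G'$ of $\d'$ with vertices $v_1,\ldots,v_n$ labelled so that $\deg_{G'}(v_i)=d_i-1$ for $i\le\delta$ and $\deg_{G'}(v_i)=d_i$ for $i>\delta$, introduce a new vertex $v$ and join it to $v_1,\ldots,v_\delta$. The resulting graph clearly realizes $\D(\delta)$.

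For the harder direction ($\Rightarrow$), take any realization $H$ of $\D(\delta)$, let $v$ denote the vertex of degree $\delta$, and label the remaining vertices $v_1,\ldots,v_n$ so that $\deg_H(v_i)=d_i$. The goal is to rewire $H$ via degree-preserving edge swaps until $N_H(v)=\{v_1,\ldots,v_\delta\}$; deleting $v$ from the resulting realization then yields one of $\d'$. If the neighborhood is not already correct, pick indices $i\le\delta<j$ with $v_i\notin N_H(v)$ and $v_j\in N_H(v)$. The inequality $d_i\ge d_j$ combined with the fact that $v\in N(v_j)\setminus N(v_i)$ will produce a vertex $x\notin\{v,v_i,v_j\}$ with $x\sim v_i$ and $x\not\sim v_j$; swapping the edges $\{v,v_j\},\{x,v_i\}$ for $\{v,v_i\},\{x,v_j\}$ preserves all degrees and strictly enlarges $N_H(v)\cap\{v_1,\ldots,v_\delta\}$, so after at most $\delta$ iterations the process terminates.

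The only technical point, and the main obstacle, is producing the swap vertex $x$. I expect this to follow from comparing $|N(v_i)\setminus\{v_j\}|=d_i-\mathbf{1}[v_i\sim v_j]$ with $|N(v_j)\setminus\{v,v_i\}|=d_j-1-\mathbf{1}[v_i\sim v_j]$, where the $-1$ accounts for $v\in N(v_j)$; the two cases $v_i\sim v_j$ and $v_i\not\sim v_j$ are handled uniformly by this bookkeeping, and the strict inequality $d_i\ge d_j>d_j-1$ guarantees an $x$ with the desired properties while ruling out $x\in\{v,v_i,v_j\}$.
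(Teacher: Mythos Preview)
Your proof is correct and is essentially the approach the paper intends: the paper simply asserts that the lemma follows from \Cref{tm:HH} without further detail, relying on the well-known fact that the Havel--Hakimi swap argument works at any designated vertex, not only the one of maximum degree (the paper later records exactly this observation as \Cref{true-HH}). Your write-up is just a careful, self-contained execution of that same swap argument, and the bookkeeping you give for producing the swap vertex $x$ is clean and correct.
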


\noindent Next we show that the graphicality of the extended sequence is closely related to the existence of large enough matchings in some realizations of the graphic sequence. To that end we introduce a number of notions and notations.

Denote by $m(G)=|E(G)|$ the number of edges in $G$ and by $\Delta(G)$ the
maximum degree in $G$. When $G$ is known from the context, we use only $m$ or
$\Delta$. The degree sequence of a graph $G$ is denoted by $\d(G)$, while the
degree of a vertex $v$ in a degree sequence $\d=\d(G)$ is denoted by $d(v)$. The
$i$\textsuperscript{th} element of a degree sequence $\d$ is denoted by
$d_i$.

A \emph{matching} is a set of pairwise independent edges, and a $k$-matching is a matching comprising $k$ edges. A matching is \emph{maximum} if it has the greatest cardinality among all matchings in $G$. We will use the notation $\nu(G)$ for this maximum cardinality matching (recall, this is the \emph{matching number} of $G$). In turn, a matching is \emph{maximal} if it is not a proper subset of another matching. Clearly, all maximum matchings are maximal,  but not vice-versa. Let $M$ be a matching in some graph $G$. As usual, we say that a vertex $v \in V(G)$ is \emph{matched} if there exists an edge in $M$ incident with $v$.

\bigskip
Consider the integer sequence $\k = (k_{1}, k_{2}, \ldots ,k_{n})$, where for
all $i$ we have $k-1 \le k_i \le k$ where $k$ is a natural number. The following, not very well known
theorem was conjectured by Gr\"unbaum, \cite{grunbaum_combinatorial_1970} (Generalized $k$-factor Conjecture), and was first proven in \cite{kundu_k-factor_1973}:
\begin{theorem}[Kundu, 1973]\label{tm:K}
    Let $\d=(d_1, d_2, \ldots, d_n)$ and $(d_1-k_{1}, d_2-k_{2}, \ldots,d_n-k_{n})$ be
    two graphic sequences. Then there exists a realization of $\d$ which  contains a $\k$-factor.
\end{theorem}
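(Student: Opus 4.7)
The plan is to prove a stronger structural claim that implies the theorem: there exist realizations $G_1$ of $\d$ and $G_2$ of $\d-\k$ on the \emph{same} labeled vertex set $V=\{v_1,\dots,v_n\}$ such that $E(G_2) \subseteq E(G_1)$. Given such a pair, the edge set $F := E(G_1) \setminus E(G_2)$ is a spanning subgraph of $G_1$ with $\deg_F(v_i) = d_i - (d_i - k_i) = k_i$ at every vertex, so $F$ is the required $\k$-factor sitting inside a realization of $\d$.

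To establish the claim, I would begin with arbitrary realizations $G_1$, $G_2$ on $V$ and let the \emph{discrepancy} be $m := |E(G_2) \setminus E(G_1)|$. I would show by induction on $m$ that the pair can always be modified via degree-preserving 2-switches (which are known to connect any two realizations of a graphic sequence) in such a way that $m$ strictly decreases whenever $m>0$. Color $E(G_1)$ red and $E(G_2)$ blue, treating common edges as both colors; then at each vertex $w$, the number of red-only neighbors exceeds the number of blue-only neighbors by exactly $k_w \in \{k-1, k\}$. Pick a purely blue edge $\{u,v\}$; near-regularity together with this imbalance would be used to locate an alternating trail of red-only and blue-only edges that can be closed into a coordinated pair of 2-switches, one inside $G_1$ and one inside $G_2$, whose combined effect converts $\{u,v\}$ from purely blue into a bicolored edge without altering any degree in either graph.

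The principal obstacle will be constructing a legal alternating trail whose closing swap respects simple-graph constraints: neither 2-switch may create a multi-edge or a loop, and the trail must actually close. The near-regularity hypothesis $k-1 \leq k_i \leq k$ is precisely what prevents the red/blue imbalance at any intermediate vertex from being too extreme, and it is what guarantees the trail can be closed; without this hypothesis the trail could get stuck at a vertex whose accessible red-only edges all terminate at forbidden partners. The argument therefore reduces to a careful local case analysis around $\{u,v\}$, classifying the neighbors of $u$ and $v$ by their red/blue status and exhibiting, in each case, the closing swap. Since $m$ is a nonnegative integer that strictly decreases at each inductive step, iterating terminates at a pair with $E(G_2) \subseteq E(G_1)$, completing the proof.
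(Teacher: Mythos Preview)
The paper does not supply a proof of this statement: \Cref{tm:K} is quoted as a known result of Kundu~\cite{kundu_k-factor_1973}, with a remark that Lov\'asz~\cite{lovasz_valencies_1974} independently gave a simple proof of the special case $\k=(1,\dots,1)$. So there is no in-paper argument to compare against.

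Your outline is essentially Kundu's original strategy: fix realizations $G_1$ of $\d$ and $G_2$ of $\d-\k$ on the same labeled vertex set, minimize the discrepancy $|E(G_2)\setminus E(G_1)|$, and argue that a positive discrepancy can always be reduced by suitable 2-switches while preserving both degree sequences. You correctly identify that the near-regularity condition $k-1\le k_i\le k$ is what makes the alternating-trail closure go through; without it the statement is false in general.

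That said, what you have written is a plan, not a proof. The entire difficulty lies in the ``careful local case analysis'' you defer: one must actually exhibit, in every configuration around a blue-only edge $\{u,v\}$, a legal pair of switches (or a single switch in one graph) that decreases the discrepancy without creating parallel edges. Kundu's paper carries this out; your proposal does not. The phrase ``coordinated pair of 2-switches, one inside $G_1$ and one inside $G_2$'' is also slightly misleading---in the standard argument one typically performs a switch in just one of the two graphs at a time, and the case split is on whether a suitable partner vertex exists in the neighborhood. If you intend to submit this as a proof rather than a sketch, you will need to fill in that analysis explicitly.
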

\noindent
In the language introduced earlier: it gives a condition for a degree sequence for being potentially $k$-factor graphic. In the following we will consider only $(0,1)$-factors. In 1974, Lov\'asz, independently from \cite{kundu_k-factor_1973}, gave a simple proof for the case $\k= (1,\ldots,1)$ (\cite{lovasz_valencies_1974}).

\medskip

As far as we know, the following application of Kundu's theorem is new.
\begin{theorem}[\textbf{Weak extension condition}]\label{tm:dpgweak}
Given a graphic sequence $\d$ and a positive integer $\delta \leq n/2$, the sequence $\D(2\delta) = \d \circ (2\delta)$ is graphic if and only if the sequence $\d$ has a realization with a matching of size $\delta$.
\end{theorem}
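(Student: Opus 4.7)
The plan is to establish the two implications separately. The $(\Leftarrow)$ direction will be a direct ``gadget'' construction that converts a matching into a new vertex, while the $(\Rightarrow)$ direction will follow by combining \Cref{lm:1} with Kundu's \Cref{tm:K}; the latter is where the genuine content lies.

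For $(\Leftarrow)$, given a realization $G$ of $\d$ containing a matching $M=\{u_1w_1,\ldots,u_{\delta/2}w_{\delta/2}\}$, I would form $G^+$ by adjoining a new vertex $v$, deleting every edge of $M$, and inserting the $\delta$ new edges $vu_i$ and $vw_i$ for $1\le i\le\delta/2$. Each matched vertex loses exactly one edge and gains exactly one, so its degree is unchanged; every unmatched old vertex is untouched; and $v$ ends up with degree $\delta$. Hence $G^+$ realizes $\d\circ\delta$.

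For $(\Rightarrow)$, assuming $\d\circ\delta$ is graphic, \Cref{lm:1} immediately yields that $\d'=d_1-1,\ldots,d_\delta-1,d_{\delta+1},\ldots,d_n$ is graphic. I would then set $\k=(k_1,\ldots,k_n)$ with $k_i=1$ for $i\le\delta$ and $k_i=0$ otherwise; taking $k=1$, this $\k$ satisfies $k-1\le k_i\le k$ as required by Kundu's theorem. Since the pair $(\d,\d-\k)=(\d,\d')$ is graphic, \Cref{tm:K} supplies a realization $G$ of $\d$ containing a $\k$-factor $F$. In $F$ the first $\delta$ vertices all have degree $1$ and the remaining vertices have degree $0$, so $F$ is a matching of size $\delta/2$ in $G$ covering exactly the $\delta$ vertices of largest prescribed degree.

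The main obstacle is conceptual rather than technical: the $(\Rightarrow)$ direction is not combinatorially transparent on its own, but it collapses to a single invocation of Kundu's theorem once one identifies the right $\k$. A pleasant by-product is that the matching can in fact always be chosen to cover the $\delta$ vertices of largest degree, a strengthening that is likely to be useful in later sections. Finally, the monotonicity remark stated after the definition of $\delta^*$ comes for free, since any matching of size $\delta/2$ contains a submatching of size $\delta'/2$ for every even $0<\delta'<\delta$, so $\d\circ\delta'$ is graphic by the $(\Leftarrow)$ direction.
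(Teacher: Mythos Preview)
Your proof is correct and uses the same two ingredients as the paper: the pinching construction for $(\Leftarrow)$ and Kundu's theorem for $(\Rightarrow)$. The only variation is that the paper applies Kundu to the pair $(\d,\, d(G^+-v))$ obtained by deleting the degree-$\delta$ vertex from an arbitrary realization of $\d\circ\delta$, whereas you first invoke \Cref{lm:1} so that the $1$'s of $\k$ sit at the top $\delta$ positions; as you note, this shortcut already yields the strong form that the paper postpones to \Cref{tm:dpg}.
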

Let's remark that while technically the proof is quite simple, the statement itself is rather surprising, and in the context of the DPG processes it is also meaningful. It supports that using matchings in the degree preserving graph growth process is a rather natural approach.
\begin{proof}
    If a realization $G$ of the graphic sequence $\d$ has a matching $M$ of size $\delta$, then we construct $G^+$ by
    \emph{pinching} the edges of the matching $M$ onto a new vertex $v$: let $V(G^+)=V(G)\cup \{v\}$, and let
    \begin{equation*}
        E(G^+)=E(G)-M+\{vu\ |\ u\in \cup M\}.
    \end{equation*}
    It is easy to see that the degree sequence of $G^+$ is $\d\circ (2\delta)$.

    \medskip

    For the other direction: let $G^+$ be a realization of $\d \circ (2\delta)$ on the vertex set $V^+$ and let $v \in V^+$ a vertex with degree $d(v)=2\delta$. Take the subgraph $G=G^+-v$ (delete the vertex $v$ and its incident edges). Since both $\d$ and $d(G)$ are graphic degree sequences on the vertex set $(V^+ - v)$, and $\d = d(G) + \k$ where $\k$ has $2\delta$ ones and $n-2\delta$ zeros, therefore Kundu's theorem immediately implies that there is a realization of $\d$ on $(V^+ - v)$ which contains a $\delta$-matching on the neighbors of $v$ in $G^+$.
\end{proof}
\noindent Let us recall a well-known fact:
\begin{lemma}\label{true-HH}
    Let $\d$ be a non-increasing positive integer sequence, and let $\d \circ
    (2\delta)$ be graphic. Let $G$ be a realization of $\d\circ (2\delta)$, where $\deg(u)=2\delta$. If $uv_i\in E(G)$ and $uv_j\not\in E(G)$ for some $j<i$, then there exists another realization $G'$ of $\d \circ (2\delta)$ where $uv_j \in E(G')$ and $uv_i\not\in E(G')$, but the rest of the
    neighborhood of $u$ is not changed.
\end{lemma}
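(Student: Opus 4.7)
The plan is to produce $G'$ from $G$ by a single 2-switch (edge swap) that alters $u$'s neighborhood in exactly the required way while leaving all other degrees unchanged.

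The key step is to find a vertex $w \in V(G)\setminus\{u,v_i,v_j\}$ such that $wv_j\in E(G)$ but $wv_i\notin E(G)$. Once such a $w$ is located, I simply set
\[
  E(G')=\bigl(E(G)\setminus\{uv_i,wv_j\}\bigr)\cup\{uv_j,wv_i\}.
\]
Since $uv_j$ and $wv_i$ are non-edges of $G$ by hypothesis and by the choice of $w$, $G'$ is simple; a direct check shows that $G'$ has the same degree sequence as $G$, that $u$ loses $v_i$ and gains $v_j$ as a neighbor, and that every other neighbor of $u$ is preserved.

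The existence of $w$ follows from a short counting argument using $d_j\ge d_i$ (which holds because $\d$ is arranged), split into two cases according to whether $v_iv_j\in E(G)$. If $v_iv_j\notin E(G)$, then inside $V\setminus\{u,v_i,v_j\}$ the vertex $v_j$ has exactly $d_j$ neighbors (since $u\notin N(v_j)$) while $v_i$ has exactly $d_i-1$ neighbors (since $u\in N(v_i)$), so $|N(v_j)\setminus N(v_i)|\ge d_j-(d_i-1)\ge 1$ in that vertex set. If $v_iv_j\in E(G)$, then the corresponding counts become $d_j-1$ and $d_i-2$, and the same inequality $d_j-d_i+1\ge 1$ produces the desired $w$. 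Either way, such a $w$ exists.

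The main obstacle is purely bookkeeping: ensuring that in the case $v_iv_j\in E(G)$ the mutual edge $v_iv_j$ is correctly discounted in both degree counts so that the resulting inequality still yields a valid $w$. No deeper machinery (not even Kundu's theorem or Lemma~\ref{lm:left}) is needed, because the statement concerns a single realization and is resolved by one Havel--Hakimi-style swap localized at $u$.
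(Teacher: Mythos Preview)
Your argument is correct and is precisely the standard Havel--Hakimi 2-switch that the paper alludes to; indeed the paper does not spell out a proof at all but merely remarks that ``this is the main step in the original proof of Havel's theorem'' and leaves the details to the reader. You have supplied exactly those details, and your case split on whether $v_iv_j\in E(G)$ together with the counting $|N(v_j)\setminus N(v_i)|\ge d_j-d_i+1\ge 1$ (restricted to $V\setminus\{u,v_i,v_j\}$) is clean and accurate.
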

\noindent  This statement motivates introducing a partial ordering on all $(0,1)$ sequence $\k$ with $2\delta$ ones:
\begin{description}
  \item[$(*)$] $\k_1 \preceq \k_2$ if and only if the ones in $\k_1$  can be produced from the ones in $\k_2$ by \emph{left-shifting}.
\end{description}
\begin{lemma}\label{lm:left}
    Let $\d$  be a non-increasing positive integer $n$-sequence, and let $\k$ be
    a $(0,1)$ sequence with $2\delta$ ones. If sequence $\d -
    \k$ is graphic, then so is the sequence $\d - \k'$, for all $\k'$ where
    $\k' \preceq \k$.
\end{lemma}
Note, we do not assume that $\d$ is graphic.
\begin{proof}
Let $G$ be a realization of the graphic sequence $\d - \k$. We construct a
new graph $G'$ by adding a new vertex $u$ to $G$ and connecting it to each $v_i$
where $\k(i)=1$. The degree sequence of $G'$ is $\D(2\delta)= \d\circ (2\delta)$.

Let us (iteratively) apply \Cref{true-HH} to $G'$. Therefore, since $\k'\preceq
\k$,
\begin{description}
  \item[$(**)$] there exists another graphic realization $G^*$ of $\D(2 \delta)$ where the
      neighbors of $u$ are those  $v_i$, for which $\k'(i)$ are ones.
\end{description}
We remark that statement $(**)$ is similar to \cite[Lemma 4]{kim_degree-based_2009}. Now deleting $u$ and its edges from $G^*$, we obtain a graph $G^+$ whose degree sequence is $\d - \k'$.
\end{proof}

\begin{corollary}\label{tm:M}
    Let $\d$ be a non-increasing graphic degree sequence. The degree sequence
    $\d$ has a realization with a matching of size $\delta$ if and only if the
    sequence $d_1-1,\ldots,d_{2\delta}-1,d_{2\delta+1},\ldots,d_n$ is graphic.
\end{corollary}
\begin{proof}{($\Rightarrow$)}
Assume there is a realization $G$ of $\d$ which contains a $\delta$-matching.
Delete this matching, obtaining a graph with degree sequence $\d -\k$ where
$\k$ has $2\delta$ ones. Since $\k_0$ with ones on the first $2\delta$ positions is the minimal element in the poset, \Cref{lm:left} proves the statement.\\
{($\Leftarrow$)} Since, by assumption, $\d-\k_0$ is graphic,  Kundu's Theorem~\ref{tm:K} provides a realization with a $\delta$-matching.
\end{proof}
\noindent Now \Cref{lm:1} and \Cref{tm:M} then lead immediately to a strengthening of
\Cref{tm:dpgweak}:
\begin{theorem}[\textbf{Strong extension condition}]\label{tm:dpg}
Given a graphic sequence $\d$ and a positive even integer $\delta \leq n/2$, the
sequence $\D(2\delta) = \d \circ (2\delta)$ is graphic if and only if the sequence
$\d$ has a realization with a matching of size $\delta$ that covers the
vertices with the largest $2\delta$ degrees.
\end{theorem}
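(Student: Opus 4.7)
The plan is to chain \Cref{lm:1} with \Cref{tm:M} into a single equivalence, and then observe that the strengthening ``covers the top $\delta$ degrees'' falls out essentially for free from the specific $\k$-factor used in the proof of \Cref{tm:M}.

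Concretely, let $\k_0$ be the $(0,1)$-valued sequence with support $A(\k_0)=\{1,\ldots,\delta\}$, so that
\begin{equation*}
\d-\k_0=(d_1-1,\ldots,d_\delta-1,d_{\delta+1},\ldots,d_n).
\end{equation*}
\Cref{lm:1} then states that $\d\circ\delta$ is graphic if and only if $\d-\k_0$ is graphic. For the $(\Leftarrow)$ direction of the theorem, I would start from a realization $G$ of $\d$ containing a matching of size $\delta/2$ on the top $\delta$ degree vertices, delete the matched edges to obtain a realization of $\d-\k_0$, and apply \Cref{lm:1} to conclude that $\d\circ\delta$ is graphic. For the $(\Rightarrow)$ direction, assuming $\d\circ\delta$ is graphic, \Cref{lm:1} yields that $\d-\k_0$ is graphic, and then Kundu's theorem (\Cref{tm:K}), applied to the pair $\d$ and $\d-\k_0$ with $k=1$, produces a realization of $\d$ containing a $\k_0$-factor, i.e.\ a subgraph in which the vertices indexed by $A(\k_0)$ have degree one and all others degree zero. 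Such a $\k_0$-factor is exactly a matching of size $\delta/2$ covering the vertices with the largest $\delta$ degrees, as required.

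The only point worth pausing on is that the ``covers the top $\delta$ degrees'' clause is precisely what is hiding in the choice of $\k_0$: among all $(0,1)$-indicators of weight $\delta$ it is the minimal element in the partial order~(*), because its support $\{1,\ldots,\delta\}$ left-shifts no other support. This is also where \Cref{lm:left} earns its keep, since in the forward half of \Cref{tm:M} it is used to left-shift any matching of size $\delta/2$ onto these minimal indices; in the present application, however, we only need the direction supplied by Kundu's theorem, so I anticipate no real obstacle. The theorem is an immediate corollary of \Cref{lm:1}, \Cref{tm:M}, and \Cref{tm:K}.
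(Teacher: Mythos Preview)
Your proposal is correct and follows essentially the same route as the paper: the paper's proof is a one-line assertion that the statement follows immediately from \Cref{lm:1} and \Cref{tm:M}, and you have simply unpacked this, correctly noting that the ``covers the top $\delta$ degrees'' clause comes from the specific application of Kundu's theorem with $\k_0$ in the $(\Leftarrow)$ direction of \Cref{tm:M}. If anything, your version is slightly more explicit than the paper's, since the \emph{statement} of \Cref{tm:M} alone only yields the weak extension condition; the strong form relies on the $\k_0$-factor appearing inside its proof, which you have spelled out.
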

\noindent The special case of this result for \emph{perfect matchings} was proved  by Lov\'asz and Plummer in~\cite[Theorem~10.3.3]{plummer_matching_1986}.
The proof of \Cref{tm:M} from \Cref{tm:dpgweak} is clearly not hard. However, one can argue that this statement can play a similar role as the (also simple) Havel-Hakimi theorem.

\begin{corollary}[]\label{lm:2}
Let $\d$ be a non-increasing graphic sequence on $n$ vertices. If the sequence
$\D(2\delta) =  \d \circ (2\delta)$ is graphic, then the sequence $\D(2\delta')$
is also graphic for all positive integers $\delta'$ less than $\delta$.
\end{corollary}
\begin{proof}
    Since sequence $\D(2\delta)$ is
    graphic, by \Cref{tm:dpg}, the graphic sequence $\d$ has a graphic
    realization with $\delta$ independent edges, and thus it also has $\delta'$
    independent edges. The graphicality of $\D(2\delta')$ follows from
    \Cref{tm:dpg}.
\end{proof}
In view of \Cref{lm:2} one can easily find the largest integer $\delta^{*}$
for which $\D(2\delta^*)$ is graphic, by running a binary search on the interval $\delta \in [2,\lfloor n/2 \rfloor]$.

\subsection{Potentially maximum matchings - analytically}\label{sec:EG}
In this subsection we derive the exact size (Theorem \ref{thm:nu_star_exact}) and a lower bound (Theorem \ref{thm:nu_star_lower_bound}) of the potentially maximum matching
of a given degree sequence from the Erdős-Gallai inequalities.
\begin{definition}
Denote by $\mathcal{G}(\d)$  the set of all labelled simple graph realizations of a graphic degree sequence $\d$ and let
\begin{equation}
\nu^{*}(\d) = \max_{G \in{\cal G}(\mathbf{d}) } \nu(G) \label{mm}
\end{equation}
denote the largest matching number among all of its realizations. (Of course $\nu^{*} =\delta^{*}$, where the latter was determined at end of the previous subsection.)
\end{definition}
\noindent In this subsection we are looking for an analytical formula for
$\nu^{*}$ involving as few inequalities as possible. The basis of our study is
the well-known Erd\H{o}s-Gallai theorem:

\begin{theorem}[Erd\H{o}s-Gallai, 1961,~\cite{EG}]\label{thm:EG}
    A non-increasing degree sequence $\d$ on $n$ vertices is graphic if and only if
    $\sum_{i=1}^n d_i$ is even and
    \begin{equation}\label{eq:EG}
        \sum_{i=1}^k d_i\le k(k-1)+\sum_{i=k+1}^n \min\{d_i,k\}
    \end{equation}
    holds for any $1\le k\le n$. The equivalence is true even if~\eqref{eq:EG}
    is required to hold only for $k=n$ and those $k$ that satisfy $d_k>d_{k+1}$.
\end{theorem}
\noindent This theorem can be also derived from Tutte's factor theorem (see~\cite{tutte2}).
\Cref{tm:dpgweak} implies that $\nu^*(\d)$ can be determined in polynomial time
by finding the largest integer $\delta$ for which $\d\circ (2\delta)$ is graphical,
which can easily be checked via the Erd\H{o}s-Gallai inequalities. However, a number
of these inequalities need not be checked in this scenario, as we will prove in
\Cref{thm:nu_star_exact}.
\begin{definition}
    For any non-increasing degree sequence $\d$, let
    \begin{equation*}
        t_{\d}(\delta)=\big|\left\{ i>\delta\ |\
        d_i=d_{\delta}\right\}\big|-\big|\left\{ i<\delta\ |\
        d_i=d_{\delta}\right\}\big|.\\
    \end{equation*}
\end{definition}

\begin{theorem}\label{thm:nu_star_exact}
    For any graphic (non-zero) non-increasing degree sequence $\d$
    \begin{equation}\label{eq:potentially}
        \begin{split}
            \nu^*(\d)=\max\Bigg\{\frac12\delta\in \mathbb{N}\ \Big|\ &\sum_{i=1}^k d_i\le k^2
                +\sum_{i=k+1}^n \min\{d_i-I_{i\le \delta},k\}\ \forall 1\le
                k<\frac12\delta\text{\ and}\\ & \sum_{i=1}^k
                d_i+|\{i>\delta\ |\ d_i=d_{\delta}\}|\le k^2+\\
        & +\sum_{i=k+1}^n \min\{ d_i-I_{d_i=d_{\delta}},k\}\text{\ for
        }k=\delta+t_\d(\delta) \Bigg\}
        \end{split}
    \end{equation}
    where $I_{X}=1$ if $X$ is true, otherwise $I_{X}=0$.
\end{theorem}
\noindent The proof of this complicated formula is based on  algebraic manipulations of the Erd\H{o}s-Gallai's theorem, and it is shown in the Appendix. Note, the remaining inequalities  in \cref{eq:potentially} can be quite complex from an analytical point of view. As the last result in this section, we provide
a simple lower bound for $\nu^*(\d)$.
\begin{theorem}\label{thm:nu_star_lower_bound}
    The size of the potentially maximum matching in a graphic sequence $\d$ is
    \begin{equation}\label{eq:nu_star_lower_bound}
        \nu^*(\d)\ge \min_{k=1,\ldots,n}\left\lfloor k-1+\frac12 \big|\{i \mid k\le
        d_i\le d_{k}\}\big|\right\rfloor
    \end{equation}
\end{theorem}
\begin{proof}
    For convenience, let
    \begin{equation*}
        m^*=\min_{k=0,\ldots,n-1}\left\lfloor k+\frac12 |\{i \mid k+1\le
        d_i\le d_{k+1}\}|\right\rfloor.
    \end{equation*}
    By \Cref{tm:dpgweak}, to deduce \cref{eq:nu_star_lower_bound}, it is sufficient to prove that
    $d\circ (2m^*)$ is graphic. Note, that
    \begin{equation*}
        m^*\le 0+\frac12 |\{i \mid 1\le
        d_i\le d_{1}\}|\le \frac12 n,
    \end{equation*}
    and $m^*$ is a clearly a positive integer. Since $\d$ is graphical, \Cref{thm:EG} applies. First, we distinguish three cases.

    \begin{itemize}
        \item For $k=n$, we have
    \begin{align*}
        \sum_{i=1}^n d_i &\le n(n-1),\\
        \sum_{i=1}^n d_i+2m^*< \sum_{i=1}^n d_i+2n &\le (n+1)n,
    \end{align*}
    i.e., the $(n+1)$\textsuperscript{th} Erdős-Gallai inequality holds for
    $\d\circ (2m^*)$.

        \item If $2m^*\le d_k$, then
    \begin{align*}
        \sum_{i=1}^k d_i &\le k(k-1)+\sum_{i=k+1}^n \min\{d_i,k\},\\
        \sum_{i=1}^k d_i &\le k(k-1)+\min\{2m^*,k\} +\sum_{i=k+1}^n
        \min\{d_i,k\}.
    \end{align*}
    The last inequality means that the $k$\textsuperscript{th} Erdős-Gallai inequality holds for $\d\circ (2m^*)$
    if $2m^*\le d_k$ and $k\le n$.

    \item If $2m^*> d_{k+1}$ and $k<n$ is a jump locus of $\d$ then
    \begin{align*}
        \sum_{i=1}^k d_i &\le k(k-1)+\sum_{i=k+1}^n \min\{d_i,k\},\\
        \sum_{i=1}^k d_i+2k &\le k(k+1) +\sum_{i=k+1}^n\min\{d_i,k\},\\
        \sum_{i=1}^k d_i+2k+|\{i \mid k+1\le d_i\le d_{k+1}\}| &\le
        k(k+1)+\sum_{i=k+1}^n\min\{d_i,k+1\},\\
        \sum_{i=1}^k d_i+2m^* &\le k(k+1)+\sum_{i=k+1}^n\min\{d_i,k+1\}.
    \end{align*}
    The last inequality is the $(k+1)$\textsuperscript{th} Erdős-Gallai inequality associated to $\d\circ (2m^*)$.
    \end{itemize}

    Note, that  if $2m^*\ge d_k$ and $k+1$ is a jump locus of the non-increasing
    version of $\d\circ (2m^*)$, then $k$ is a jump locus of $\d$. Therefore
    we have shown that the $k$\textsuperscript{th} Erdős-Gallai inequality holds
    for $\d\circ (2m^*)$ when
    \begin{itemize}
        \item $k=n+1$,
        \item $1\le k\le n$ and $2m^*\le d_k$,
        \item $2\le k\le n$ and $2m^*> d_k$ and $k$ is a jump locus of the
            non-increasing version of $\d\circ (2m^*)$.
    \end{itemize}
    It remains to show that \cref{eq:EG} holds for $\d\circ (2m^*)$ with $k=1$
    only when $2m^*>d_1$. By taking $k=0$ in the equation defining $m^*$, we get
    \begin{align*}
        m^*\le 0+\frac12 |\{i \mid 1\le
        d_i\le d_{1}\}|\\
        2m^*\le \sum_{i=1}^n \min\{d_i,1\}.
    \end{align*}
    Via \Cref{thm:EG}, this concludes the proof that $\d\circ (2m^*)$ is graphic.
\end{proof}

\section{Forcibly \texorpdfstring{$(0,1)$}{(0,1)}-factor graphic degree sequences}\label{sec:force}

In this section we are looking for conditions on the degree sequence $\d$ which
make it forcibly $\delta$-matching graphic (i.e., each realization of $\d$
should contain a $\delta$-matching).

\subsection{How big must be the maximal matching in any realization of a general degree sequence?}\label{sec:general}
In this subsection we will study the maximal forcible matching graphic property for general degree sequences. Recall, a matching is maximal, if it is not fully contained by another matching.

For any matching $M$ in a graph $G(V,E)$, let $V_{M} \subseteq V$ denote the set of matched vertices and let $U_{M}$ be the set of unmatched vertices. Clearly, $V_{M} \cup U_{M} = V$. Note that $|V_{M}| = 2 \lvert M \rvert$.

\begin{proposition}\label{tm:mv}
For any maximal matching $M$ in graph $G$  with no isolated vertices we have:
\begin{enumerate}[label={\rm
(\roman*)}]
\item
\begin{equation}
\sum_{v \in V_{M}} d(v) \geq  \sum_{u \in U_{M}} d(u) + 2|M|. \label{mun}
\end{equation}
\item
\begin{equation}
\sum_{v \in V_{M}} d(v) \geq  m(G) + |M|. \label{mun1}
\end{equation}
\end{enumerate}
\end{proposition}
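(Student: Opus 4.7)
The plan is to extract both inequalities from a single, elementary edge count, leveraging the maximality of $M$. The key observation is that $U_M$ must be an independent set: if $uu'$ were an edge with $u,u'\in U_M$, then $M\cup\{uu'\}$ would be a matching strictly larger than $M$, contradicting maximality. Consequently every edge of $G$ incident to a vertex of $U_M$ has its other endpoint in $V_M$.

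With this in hand, I would partition $E(G)$ into the set of edges lying entirely in $V_M$ and the set of edges with exactly one endpoint in $V_M$; call their cardinalities $a$ and $b$ respectively. Since $U_M$ spans no edges, counting degrees from each side gives
\begin{equation*}
    \sum_{v\in V_M} d(v)=2a+b,\qquad \sum_{u\in U_M} d(u)=b,\qquad m(G)=a+b.
\end{equation*}
Moreover $a\ge |M|$, because the $|M|$ edges of the matching itself all lie inside $V_M$.

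Part (i) is then immediate: subtracting the second identity from the first yields $\sum_{v\in V_M}d(v)-\sum_{u\in U_M}d(u)=2a\ge 2|M|$. Part (ii) follows just as quickly from the first and third identities, which give $\sum_{v\in V_M}d(v)-m(G)=a\ge |M|$. (Equivalently, (ii) is the average of (i) and the trivial identity $\sum_{v\in V_M}d(v)+\sum_{u\in U_M}d(u)=2m(G)$.) There is no real obstacle: the only point requiring care is recognizing that \emph{maximality}, not maximum cardinality, is precisely what is needed to guarantee that $U_M$ is independent, and that the slack in $a\ge |M|$ measures exactly the extra edges among matched vertices not used by $M$.
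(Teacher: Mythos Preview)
Your proof is correct and follows essentially the same route as the paper: both observe that maximality forces $U_M$ to be independent so every edge touches $V_M$, and that $M\subseteq G[V_M]$; the paper then derives (ii) from (i) exactly as in your parenthetical remark. Your introduction of the explicit counts $a$ and $b$ makes the arithmetic more transparent than the paper's terse sketch, but the underlying argument is identical.
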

\begin{proof}
    \begin{enumerate}[label={(\rm\roman*)}\ ]
        \item The vertices in $U_{M}$ are clearly independent. Furthermore each
            edge incident with $u\in U_M$ must be incident with an $v\in V_M$.
            Finally $M$ is completely within $V_M$.

        \item Follows immediately from \eqref{mun} after adding to both sides $\sum_{v \in V_{M}} d(v)$ and observing that
$\sum_{v \in V_{M}} d(v) + \sum_{v \in U_{M}} d(v)  = \sum_{v \in V} d(v) =  2m(G)$.
\end{enumerate}
\end{proof}
\noindent We can now use this to obtain more concrete lower bounds on the LHS of~\eqref{mun1}. The simplest of these is:
\begin{corollary}[First observed by Biedl, Demaine {\it et. al.} \cite{biedl_tight_2004}]\label{co:m1}
For any maximal matching $M$ of a graph $G$:
\begin{equation}
|M| \geq \frac{m(G)}{2 \Delta(G) - 1}. \label{mun2}
\end{equation}
\end{corollary}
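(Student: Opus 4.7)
The plan is to deduce this directly from part (ii) of \Cref{tm:mv}, which gives $\sum_{v\in V_M} d(v) \geq m(G) + |M|$ for any maximal matching $M$. So the strategy is simply to bound the left-hand side from above using the trivial degree bound and then rearrange.

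First I would observe that every vertex in $V_M$ has degree at most $\Delta(G)$, and that $|V_M| = 2|M|$ by definition of a matching. This immediately yields
\begin{equation*}
    \sum_{v\in V_M} d(v) \leq 2|M|\cdot \Delta(G).
\end{equation*}
Combining this with the inequality from \Cref{tm:mv}(ii), I get
\begin{equation*}
    2|M|\cdot \Delta(G) \geq m(G) + |M|,
\end{equation*}
and isolating $|M|$ gives $(2\Delta(G)-1)|M| \geq m(G)$, which is exactly \eqref{mun2}. Note that $2\Delta(G)-1 > 0$ whenever $G$ has an edge (and the statement is vacuous otherwise, since then $m(G)=0$).

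There is no real obstacle here — the content of the corollary is entirely packed into \Cref{tm:mv}(ii), and this is purely a one-line manipulation. The only thing worth flagging is that the bound is tight only when every matched vertex has degree exactly $\Delta$ and the inequality from \Cref{tm:mv}(ii) is tight, which is the content of the Biedl–Demaine et al.\ analysis cited as the source.
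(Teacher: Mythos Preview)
Your proof is correct and matches the paper's approach exactly: the paper also derives the bound directly from \Cref{tm:mv}(ii) by noting that $\sum_{v\in V_M} d(v)\le 2|M|\Delta(G)$ and rearranging.
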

\begin{proof}
Follows immediately from~\eqref{mun1} after noting that $ 2|M| \Delta \geq
\sum\limits_{v \in V_{M}} d(v)$.
\end{proof}
This statement means that if a matching is smaller than the RHS of~\eqref{mun2},
then the matching can be greedily extended to a bigger one. We remark that this
proof is different from the proof in~Biedl~et~al.\ \cite[Theorem~7]{biedl_tight_2004}. The
authors of that paper raised the following question (Section 5, Problem 2):
``What can be said about the size of maximum matchings in graphs? Can we improve on bound (\ref{mun2})?'' We offer answers in \Cref{tm:maxbound} and \Cref{thm:noP3} below.

\begin{theorem}[Maximality-bound]\label{tm:maxbound}
Let $G$ be a graph without isolated vertices and with the non-increasing degree sequence $\d$. For any maximal matching $M$ in $G$, we have
\begin{equation}\label{eq:max}
|M|\ge k^* = \min\left\{ k\in \mathbb{N}\ \Big|\ \sum_{i=1}^{2k} d_i- m(G) -k\ge 0\right\}.
\end{equation}
The degree sequence is forcibly $k^*$-matching graphic.
\end{theorem}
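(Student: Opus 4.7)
The plan is to combine inequality~\eqref{mun1} of \Cref{tm:mv} with the fact that $\d$ is arranged in non-increasing order. Let $M$ be a maximal matching in $G$ and write $\mu=|M|$, so $|V_M|=2\mu$. The key observation is that the sum of degrees over any set of $2\mu$ vertices is maximized by taking the $2\mu$ vertices of largest degree; hence
\begin{equation*}
    \sum_{i=1}^{2\mu} d_i\ \ge\ \sum_{v\in V_M} d(v).
\end{equation*}

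Next, I would plug in the bound $\sum_{v\in V_M} d(v)\ge m(G)+|M|$ from \Cref{tm:mv}(ii), which holds for every maximal matching (this is exactly where the \emph{maximality} of $M$ enters, through the independence of $U_M$). Chaining the two inequalities gives
\begin{equation*}
    \sum_{i=1}^{2\mu} d_i - m(G) - \mu\ \ge\ 0,
\end{equation*}
so $\mu$ belongs to the set on the right-hand side of~\eqref{eq:max}. Taking the minimum of that set yields $|M|=\mu\ge k^*$, which is the desired bound.

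For the ``forcibly $k^*$-matching graphic'' statement, I would simply note that any maximum matching is in particular a maximal matching, so $\nu(G)\ge k^*$ for every realization $G$ of $\d$; equivalently, every realization contains a $k^*$-matching.

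I do not anticipate a serious obstacle here: the argument is a one-line arrangement inequality combined with \Cref{tm:mv}(ii). The only subtlety worth flagging is that $V_M$ consists of exactly $2\mu$ vertices (never fewer), so comparing its degree sum to the top $2\mu$ entries of $\d$ is clean; no case analysis on whether $2\mu$ exceeds some threshold is required because maximality of $M$ forces $2\mu\le n$, and if $2k>n$ the inequality defining $k^*$ is understood with the convention that the sum runs over the available indices (or equivalently, $k^*\le \lfloor n/2\rfloor$ automatically since the full degree sum equals $2m(G)\ge m(G)+\lfloor n/2\rfloor$ for graphs without isolated vertices).
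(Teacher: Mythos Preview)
Your proof is correct and follows essentially the same route as the paper: both combine \Cref{tm:mv}(ii) with the observation that the degree sum over $V_M$ is bounded above by the sum of the $2|M|$ largest entries of $\d$, concluding that $|M|$ lies in the defining set and hence $|M|\ge k^*$. The only cosmetic difference is that the paper phrases the argument via the auxiliary function $r(k)=\sum_{i=1}^{2k}d_i-m(G)-k$ and notes its strict monotonicity, whereas you argue directly; neither adds logical content beyond what you wrote.
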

\begin{proof}
	Let $r(k)=\sum_{i=1}^{2k} d_i-m(G) -k$. Since every $d_i\ge 1$ and the
	degree sequence $\d$ is non-increasing, $r(k)$ is strictly monotone
	increasing. From \cref{mun1} it follows that for any maximal matching $M$
	of size $k$, we have
    \begin{equation}
        r(k)\ge \sum_{v\in \cup M}d(v)-m(G) -|M|\ge 0.
    \end{equation}
	In other words, if $r(k)<0$, then there exists a matching of size at
	least $k+1$, which is equivalent to the statement.
\end{proof}
As we will see soon by \Cref{lemma:at_least_half}, this affirmatively answers the question of the authors
of~\cite{biedl_tight_2004}, since this bound is stronger than the one in (\ref{mun2}).

\medskip

We also want to compare the strength of \Cref{tm:maxbound} with other known lower bounds.  However, there are not many such results on the value of $\nu(\d)$ (without additional special structural requirements on $G$, like, e.g.\ being bipartite). We are aware of only two such results. The first one is based on Vizing's seminal result on the \emph{chromatic index}:
\begin{theorem}[Vizing, 1964, \cite{vizing}]
	For any simple graph $G$, the edge-chromatic number satisfies $\chi'(G)\le \Delta(G)+1$.
\end{theorem}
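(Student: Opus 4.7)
The plan is to prove Vizing's theorem by induction on $m=|E(G)|$, following the classical fan-and-Kempe-chain argument. The empty graph supplies the trivial base. For the inductive step, pick any edge $uv\in E(G)$ and invoke the inductive hypothesis on $G-uv$ (whose maximum degree is at most $\Delta(G)$) to obtain a proper edge coloring $\varphi$ of $G-uv$ with the $\Delta(G)+1$ available colors. Since every vertex has degree at most $\Delta(G)$ in $G$ but $\Delta(G)+1$ colors are available, each vertex $x$ has at least one \emph{missing color}: a color not appearing on any $\varphi$-edge incident to $x$. If some color is missing simultaneously at $u$ and at $v$, assigning it to $uv$ completes the step.

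Otherwise I would build a \emph{Vizing fan} rooted at $u$: a maximal sequence of distinct neighbors $v_0=v,v_1,\ldots,v_k$ of $u$, together with colors $c_0,c_1,\ldots,c_{k-1}$ such that each $c_i$ is missing at $v_i$ and $\varphi(uv_{i+1})=c_i$ for $0\le i<k$. Maximality leaves two scenarios. In the easy case, some color $c^*$ missing at $v_k$ is also missing at $u$; then the \emph{shift} operation---recoloring $uv_{i+1}$ by $c_{i+1}$ for $0\le i<k-1$, assigning $c^*$ to $uv_k$, and reassigning the freed $c_0$ at $u$ to the pending edge $uv$---yields a proper edge coloring of $G$ with $\Delta(G)+1$ colors. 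In the hard case every color missing at $v_k$ already appears at $u$, and one may choose a color $c^*$ missing at $v_k$ that equals some earlier fan color $c_j$ with $1\le j\le k-1$.

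To resolve the hard case I would apply a Kempe chain swap. Fix any color $\alpha$ missing at $u$ and set $\beta=c_j$; consider the subgraph $H$ of $G-uv$ spanned by all edges of color $\alpha$ or $\beta$ under $\varphi$. Each component of $H$ is a simple path or an even cycle, and each of $u$, $v_j$, $v_k$ has at most one incident $H$-edge (because at each of them exactly one of $\alpha,\beta$ can appear), so each of the three vertices is either an endpoint of an $H$-path or isolated in $H$. A short parity argument shows that $u$ cannot share its $H$-component with both $v_j$ and $v_k$, so swapping the two colors along the component containing whichever of $v_j,v_k$ is separated from $u$ produces a new proper coloring of $G-uv$ in which the correspondingly truncated fan now satisfies the easy-case hypothesis; the shift step then completes the extension to $uv$. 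The main obstacle, and the step demanding the most care, is verifying that this swap preserves the missing-color certificates $c_i$ at all untouched fan vertices $v_i$, so that the subsequent shift remains a valid proper coloring; this reduces to checking that neither $\alpha$ nor $\beta$ was missing at those $v_i$ in the first place, which follows from the construction of the fan together with the choice of $\alpha,\beta$.
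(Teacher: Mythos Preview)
The paper does not supply a proof of Vizing's theorem; it is quoted as a classical result with a citation to West's textbook and is used only as a black box to derive the Vizing-bound on $\nu(G)$ in the subsequent corollary. Your proposal is the standard inductive fan-and-Kempe-chain argument, which is precisely the proof one finds in the cited reference, so you have reconstructed what the paper defers to the literature.

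The sketch is sound in structure. Two small imprecisions are worth tightening if you write it out in full. First, the index range for the repeated fan color should allow $j=0$: since $c^*$ is missing at $v_k$ we have $c^*\ne c_{k-1}=\varphi(uv_k)$, but nothing forbids $c^*=c_0$. Second, your closing justification is not quite the right statement: what one actually needs is that $c_i\notin\{\alpha,\beta\}$ for the untouched fan indices $i$ (which holds because the $c_i$ are distinct colors present at $u$ while $\alpha$ is missing at $u$), so the $\alpha$--$\beta$ swap cannot create or destroy a $c_i$-edge at $v_i$; whether $\alpha$ or $\beta$ is \emph{missing} at those $v_i$ is irrelevant. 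One must also note, in the subcase where you swap the component of $v_k$, that $v_j$ then lies in $u$'s component (otherwise you would have been in the other subcase), so the certificate $c_j=\beta$ at $v_j$ survives the swap. These are routine refinements of a correct argument.
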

\noindent As it is easy to see, from this one can derive the following lower bound.
\begin{corollary}[Vizing-bound]\label{cor:vizingbound}
\begin{equation}\label{eq:vizing}
\nu(G) \ge \frac{m(G)}{\Delta(G)+1}.
\end{equation}
\end{corollary}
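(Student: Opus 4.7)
The plan is to deduce the bound directly from Vizing's theorem via a pigeonhole argument on the color classes of a proper edge-coloring. Recall that a proper edge-coloring of $G$ is an assignment of colors to the edges so that any two edges sharing a vertex get distinct colors; equivalently, each color class forms a matching in $G$.

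First I would invoke Vizing's theorem to obtain a proper edge-coloring of $G$ using at most $\Delta(G)+1$ colors. This partitions $E(G)$ into at most $\Delta(G)+1$ matchings $M_1,\dots,M_{\chi'(G)}$ whose sizes sum to $m(G)$. By the pigeonhole principle, at least one of these matchings has cardinality at least
\begin{equation*}
    \frac{m(G)}{\chi'(G)}\ge \frac{m(G)}{\Delta(G)+1}.
\end{equation*}
Since $\nu(G)$ is the maximum size of any matching in $G$, this immediately gives the desired bound.

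There is essentially no obstacle here beyond quoting Vizing's theorem: the whole content of the corollary is the combination of that edge-coloring result with the trivial averaging observation that one of the color classes must be at least as large as the average class size. Nothing in the argument depends on the degree sequence beyond $\Delta(G)$, which is why (as the paper foreshadows) one should expect \Cref{tm:maxbound} to give a strictly stronger bound in cases where the degree sequence is far from regular.
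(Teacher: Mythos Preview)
Your proof is correct and is exactly the standard argument the paper has in mind: the corollary is stated immediately after Vizing's theorem with only the remark ``From this one can show the known result,'' and your pigeonhole on the color classes is the intended one-line derivation. (Your closing aside about \Cref{tm:maxbound} is commentary rather than proof; note that the paper's examples show the comparison between the two bounds is not uniformly in favor of either.)
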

This approximation can be close to the actual value if the degree distribution is concentrated, but  in case of heterogeneous degree sequences this can be vary far from being sharp. The Vizing-bound is better than inequality (\ref{mun2}), but it only applies to maximum cardinality matchings, and not necessarily to all maximal matchings.

\medskip\noindent To describe the other known lower bound we will use the following notation: let  $t_{G}(q)$ be the number of nodes in $G$ whose degree does not exceed $q$.

\begin{theorem}[{\cite[Theorem~4.4]{erdos_degree 2021}}]\label{th:posa}
    Let $G$ be a simple graph on $n$ nodes. Let
    \begin{equation} \label{eq:rG1}
        r(G):=\min \!\left\{ \ell \in \mathbb{Z}^{+}\ :\ \max_{0\le
        q<\frac{n-\ell}{2}}\left(t_G(q)-q+1\right)\le \ell \right\}.
    \end{equation}
    Then $G$ has a matching of size: $\left\lceil{\frac{n-r(G)}{2}}\right \rceil \leq \nu(G)$.
\end{theorem}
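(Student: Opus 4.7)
The plan is to argue by contradiction via the Tutte--Berge formula. Write $r := r(G)$ and suppose $\nu(G) \leq \lceil (n-r)/2 \rceil - 1$, so the deficiency $n - 2\nu(G)$ is at least $r + 1$. By Tutte--Berge there exists $U^* \subseteq V(G)$ with $o(G - U^*) - |U^*| \geq r + 1$. Writing $q_0 = |U^*|$ and letting $C_1, \ldots, C_m$ denote the odd components of $G - U^*$ with sizes $c_1 \leq \cdots \leq c_m$, this gives $m \geq q_0 + r + 1$, and the inclusion $\bigcup_j C_j \subseteq V(G) \setminus U^*$ yields $q_0 + m \leq n$, hence $q_0 < (n-r)/2$. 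My goal is to exhibit a $q$ for which $t_G(q) - q + 1 > r$, contradicting the definition of $r(G)$.

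The key technical step is the claim that, setting $T := \lceil (n-r)/2 \rceil - q_0$, at most one of the components $C_j$ has size strictly exceeding $T$. If two such \emph{large} components existed, the odd components would contain at least $2(T+1) + (m - 2) = 2T + m$ vertices; combining $m \geq q_0 + r + 1$ with $2T \geq n - r - 2q_0$ shows $2T + m > n - q_0$, contradicting that the $C_j$ all fit in $V(G) \setminus U^*$. Consequently at least $i := m - 1 \geq q_0 + r$ of the $C_j$ are \emph{small}, meaning $c_j \leq T$.

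Now set $q := q_0 + c_i - 1$. Any vertex in a small component $C_j$ with $j \leq i$ has all of its $G$-neighbors inside $C_j \cup U^*$, so its degree in $G$ is at most $c_j - 1 + q_0 \leq q$. Therefore
\[
    t_G(q) \geq c_1 + c_2 + \cdots + c_i \geq (i-1) + c_i \geq (q_0 + r - 1) + c_i = q + r,
\]
while $q \leq q_0 + T - 1 = \lceil (n-r)/2 \rceil - 1 < (n-r)/2$. Thus $q$ lies in the range where the definition of $r(G)$ forces $t_G(q) - q + 1 \leq r$, contradicting the just-established inequality $t_G(q) - q + 1 \geq r + 1$.

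The main obstacle is the size-comparison step ruling out two large components, which is a parity-sensitive integer inequality that exploits the precise definition of $T$ and must work uniformly in both parities of $n - r$. Once it is in hand, the rest is a short degree accounting that never needs the finer Gallai--Edmonds structure---only the Tutte--Berge formula and the trivial bound $\deg_G(v) \leq |C| - 1 + |U^*|$ for $v$ in a component $C$ of $G - U^*$.
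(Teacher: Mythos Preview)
Your argument is correct. The contradiction via Tutte--Berge is clean: the only place that needs care is ruling out two ``large'' odd components, and your inequality $2T+m\ge (n-r-2q_0)+(q_0+r+1)=n-q_0+1$ handles both parities of $n-r$ at once. The degree bookkeeping $t_G(q)\ge c_1+\cdots+c_i\ge (i-1)+c_i$ with $q=q_0+c_i-1$ then pins down a $q<(n-r)/2$ violating the defining inequality for $r(G)$.

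Note, however, that the paper does not prove this theorem at all; it is quoted from~\cite{erdos_degree-preserving_2021}, and the surrounding text explains that the original argument there (and the earlier one of Bondy--Chv\'atal) goes through P\'osa's, respectively Chv\'atal's, Hamiltonian cycle theorem. So your route is genuinely different: instead of passing through a Hamiltonicity criterion on an auxiliary structure, you stay entirely within matching theory and use only the Tutte--Berge deficiency formula plus the trivial degree bound $\deg_G(v)\le |C|-1+|U^*|$ for $v$ in a component $C$ of $G-U^*$. What this buys is a shorter, self-contained proof that avoids the heavier Hamiltonian machinery; what the Hamiltonian approach buys is a unifying perspective that ties the matching bound to the same degree conditions appearing in P\'osa's and Chv\'atal's theorems.
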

\noindent We will call this result \emph{P\'osa-bound}, since in~\cite{erdos_degree 2021} it was proved using P\'osa's seminal Hamiltonian cycle result~\cite{posa_theorem_1962}. A slightly different form of this result was proved  by Chv\'atal and Bondy already in \cite[Theorem 5.1]{bondy_method_1976} (using, essentially, Chv\'atal's Hamiltonian cycle theorem). Inequality~\eqref{eq:rG1} has proven to be very useful for several DPG dynamics (such as the \textbf{linear DPG} and \textbf{MaxDPG}) models. In a wide range of cases it provides much better estimates than the Vizing-bound,~\cref{eq:vizing}.

\medskip

Next we give four toy degree sequence examples, showing that these three results (Theorem \ref{tm:maxbound}, Lemma \ref{cor:vizingbound} and Theorem \ref{th:posa}) can be useful in different cases.   None of the examples are hard to prove, but it is instructive to consider them.

\begin{enumerate}[label={\rm \underline{\textbf{Example \arabic*.}}},left=0pt]
    \item For $\ell$-regular graphic degree sequence $\d$ on $n$ nodes. (Clearly, there are a big many not isomorphic $\ell$-regular graphs.) The Maximality-bound (\Cref{tm:maxbound}) yields
        \begin{equation}
        		\nu(G) \geq \frac{1}{2} \frac{\ell }{2\ell -1} n\;,
        \end{equation}
        while the Vizing-bound (\Cref{cor:vizingbound}) is almost sharp, since a ``typical'' (uniformly random) regular graph has $\nu(G)=n/2-O(\log n)$ with high probability~\cite{anastos_finding_2021}.

    \item The well-known (non-bipartite) \textbf{half-graph} is defined as follows for every
        even $n$: let the set of vertices be the integers $1,\ldots,{n}$, and two
        distinct vertices $i$ and $j$ are joined by an edge $ij$ if and only if
        $i,j\le n/2$ or $i+n/2\le j$. (Clearly, this graph is unique.) The P\'osa-bound (\Cref{th:posa}) gives the correct $\nu=n/2$, while the estimate given by the Vizing-bound is only $\sim n/4$, and the Maximality-bound is also not any better either ($\sim \frac{2-\sqrt{2}}{4}n$).

    \item Next we consider the also well-known \textbf{windmill graph} $\mathit{Wd}(t,\ell)$ where we have $t$ copies of $K_\ell$ cliques, sharing one central vertex. The special case $\ell=3$: $\mathit{Wd}(t,3)$ is called \textbf{friendship graph}. Clearly, the matching number is $\nu(\mathit{Wd}(t,3)) = t = (n-1)/2$ (near perfect matching, with one  unmatched vertex). The Maximality-bound implies that $\nu(\mathit{Wd}(t,3)) \ge \left \lceil \frac{n+3}{6} \right \rceil$.  The Vizing- and P\'osa-estimates are constants.

    \item For a general windmill graph $\mathit{Wd}(t,\ell)$ the Vizing-bound
        yields $\nu(\mathit{Wd}(t,\ell))\ge \frac{\ell-1}{2}$, the P\'osa-bound gives $\nu\ge \ell$, and the Maximality-bound implies $\nu\ge
        \frac{n-t+1}{4}$.
\end{enumerate}

\noindent In \textbf{Example~1}, the Vizing-bound is a factor of 2 better than the
Maximality-bound $k^*$ in \cref{eq:max}. However, this is a worst case scenario, as the next lemma shows.
Moreover, the next lemma also shows that the maximality-bound is better than the
bound of \Cref{co:m1}, i.e., that we answer positively to the question raised by
the authors in~\cite{biedl_tight_2004}.
\begin{lemma}\label{lemma:at_least_half}
    \begin{equation*}
        \frac12\frac{m}{\Delta+1}<\frac{m}{2\Delta-1}\le k^*
    \end{equation*}
\end{lemma}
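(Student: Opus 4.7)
The plan is to handle the two inequalities separately, noting that only the right-hand one carries any content.

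For the left inequality $\frac12\frac{m}{\Delta+1}<\frac{m}{2\Delta-1}$, I would just cross-multiply: since $m>0$ (the graph has no isolated vertices) and both denominators are positive, this reduces to $2\Delta-1<2(\Delta+1)=2\Delta+2$, which is immediate. So this part is one line.

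For the right inequality $\frac{m}{2\Delta-1}\le k^*$, the natural approach is to argue directly from the definition of $k^*$ in \cref{eq:max}. By definition, $k^*$ is the smallest non-negative integer $k$ with $\sum_{i=1}^{2k}d_i-m-k\ge 0$. I would use the trivial upper bound $d_i\le \Delta$ coordinate-wise to get
\begin{equation*}
    0\le \sum_{i=1}^{2k^*}d_i - m - k^* \le 2k^*\Delta - m - k^* = k^*(2\Delta-1)-m,
\end{equation*}
and rearranging gives $k^*\ge m/(2\Delta-1)$, as required. Equivalently, any $k$ with $k<m/(2\Delta-1)$ satisfies $\sum_{i=1}^{2k}d_i-m-k\le k(2\Delta-1)-m<0$, so such a $k$ cannot be $k^*$.

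There is no real obstacle here; the only subtle point is that the definition of $k^*$ ranges over $k\in\mathbb{N}$, so one should be slightly careful about the $k=0$ case and the case where $m=0$ (which is excluded by the assumption that $G$ has no isolated vertices, so $m\ge 1$). The inequality $\frac{m}{2\Delta-1}\le k^*$ is stated as a real-valued inequality, not an integer one, so no ceiling is needed on the right-hand side.
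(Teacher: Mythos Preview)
Your proof is correct. The left inequality is indeed trivial, and your argument for the right inequality is valid: from $r(k^*)\ge 0$ and $d_i\le \Delta$ you get $k^*(2\Delta-1)\ge m$ immediately.

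This is not quite how the paper does it, though the core estimate is the same. The paper works from the other side: it evaluates $r$ at the specific value $\lfloor m/(2\Delta-1)\rfloor$ and shows
\[
r\!\left(\left\lfloor\tfrac{m}{2\Delta-1}\right\rfloor\right)\le 2\left\lfloor\tfrac{m}{2\Delta-1}\right\rfloor(\Delta-\tfrac12)-m\le 0,
\]
then argues that if either inequality is strict one obtains $k^*>\lfloor m/(2\Delta-1)\rfloor$, and separately handles the boundary case where both hold with equality (forcing $m/(2\Delta-1)\in\mathbb{Z}$ and the top degrees all equal to $\Delta$), checking that $r\big(\tfrac{m}{2\Delta-1}-1\big)=1-2\Delta<0$. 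Your route bypasses the floor and the case split entirely by applying the degree bound at $k^*$ itself rather than at a candidate lower bound for $k^*$; this is shorter and cleaner, and there is nothing lost.
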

\begin{proof}
    Recall from the proof of \Cref{tm:maxbound} the following notation: let $r(k)= \sum_{i=1}^{2k} d_i-m(G) -k$ for a non-increasing degree sequence $\d$.
    \begin{align*}
    r\left(\left\lfloor\frac{m}{2\Delta-1}\right\rfloor\right)=\sum_{i=1}^{2\lfloor
    \frac{m}{2\Delta-1}\rfloor}d_i-m-\left\lfloor\frac{m}{2\Delta-1}\right\rfloor&\le
    \\  \le 2\left\lfloor\frac{m}{2\Delta-1}\right\rfloor\cdot(\Delta-1/2)-m&\le 0.
    \end{align*}
    If any of the inequalities is strict, then it follows from the definition
    that $\lfloor\frac{m}{2\Delta-1}\rfloor< k^*$. If every inequality holds with equality,
    then $\frac{m}{2\Delta-1}$ is an integer and $r\left (\frac{m}{2\Delta-1}- 1
    \right)=1-2\Delta<0$, which also implies the statement.
\end{proof}

\subsection{Strengthening the maximality-bound}\label{sec:maximum}
In the remaining part of this note, we will strengthen our maximality-bound for
both maximal and maximum size matchings. Let us begin with \emph{maximal} matchings.

In addition to inequality (\ref{eq:max}), we can study the derived subgraph $G[U,V]$, getting the following inequality system. This leads to a slightly stronger, but a computationally more complicated result than
\Cref{tm:maxbound}.
\begin{lemma}\label{lemma:max-matchingGR}
    Let $M$ be a maximal matching in $G$ and let its degree sequence be $\d$. Then
    \begin{equation}\label{eq:max-matchingGR}
        \forall U\subseteq U_M\quad \sum_{v \in V_{M}} \min\{ d(v)-1,|U|\} \geq
        \sum_{u \in U} d(u).
    \end{equation}
\end{lemma}
\begin{proof}
    Since $M$ is a maximal matching in $G$, $U_M$ must induce an empty graph in $G$.  The number of edges incident on $U$ is counted on the RHS of
    \cref{eq:max-matchingGR}. The set of edges incident on a vertex of $U$ must be  also incident on $V_M$. A vertex $v\in V_M$ is joined to at most $d(v)-1$ vertices of $U$, because the edge of $M$ which is incident on $v$ is not incident on $U_M$. Also, $v$ is joined to at most $|U|$ vertices of $U$. Therefore the RHS of \cref{eq:max-matchingGR} is  bounded by the sum of $\min\{d(v)-1,|U|\}$ for all $v\in V_M$. (This inequality system is very similar to the one, that appears in the well-known Gale-Ryser theorem, which fully describes the graphic bipartite degree sequences (\cite{G57, R57}).
\end{proof}
We have the following result on the forcibly $k$-matching graphic problem.
\begin{theorem}
    The size of every maximal matching $M$ in any realization of a non-increasing
    degree sequence $\d$ is at least
	\begin{equation}\label{eq:max3}
		|M|\ge \ell^*=\min \left\{\ell\ \Big|\ \sum_{i=1}^{2\ell} \min\{ d_i-1,k\} \geq
		\sum_{i=2\ell+1}^{2\ell+k} d_i,\quad\forall k=1,\ldots,n-2\ell\right\}.
	\end{equation}
\end{theorem}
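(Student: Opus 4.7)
The plan is to proceed by contradiction, leveraging the Gale-Ryser-type inequality \eqref{eq:max-matchingGR} established in the preceding lemma. Suppose that some realization $G$ of $\d$ admits a maximal matching $M$ with $|M|=\ell<\ell^*$. By the very definition of $\ell^*$, there exists some $k\in\{1,\ldots,n-2\ell\}$ at which
$$\sum_{i=1}^{2\ell}\min\{d_i-1,k\}<\sum_{i=2\ell+1}^{2\ell+k}d_i,$$
and the aim is to contradict this by applying \eqref{eq:max-matchingGR} to a carefully chosen $U\subseteq U_M$.

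Label the vertices of $G$ so that $d(v_r)=d_r$, and enumerate the indices of $V_M$ in increasing order as $i_1<\cdots<i_{2\ell}$ and those of $U_M$ as $j_1<\cdots<j_{n-2\ell}$. I would apply \eqref{eq:max-matchingGR} with $U=\{v_{j_1},\ldots,v_{j_k}\}$, the $k$ largest-degree vertices in $U_M$ (valid since $k\le n-2\ell=|U_M|$), obtaining
$$\sum_{t=1}^{2\ell}\min\{d_{i_t}-1,k\}\ \ge\ \sum_{t=1}^{k}d_{j_t}.$$

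The argument is then completed by two pigeonhole estimates on the index positions. At most $t$ of the first $t$ positions of $\d$ can belong to $V_M$, so $i_t\ge t$, hence $d_{i_t}\le d_t$; because $x\mapsto\min\{x-1,k\}$ is non-decreasing, this gives $\sum_{t=1}^{2\ell}\min\{d_t-1,k\}\ge \sum_{t=1}^{2\ell}\min\{d_{i_t}-1,k\}$, upgrading the left side of the lemma's bound to the left side of \eqref{eq:max3}. Symmetrically, at most $2\ell$ of the positions strictly below $j_t$ can lie in $V_M$, forcing $j_t\le 2\ell+t$ and hence $d_{j_t}\ge d_{2\ell+t}$, so $\sum_{t=1}^{k}d_{j_t}\ge\sum_{t=2\ell+1}^{2\ell+k}d_t$ on the right. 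Chaining these three inequalities contradicts the strict inequality above, proving $|M|\ge\ell^*$.

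I do not expect a serious obstacle: the proof is essentially a rearrangement bookkeeping. The main insight — select $U$ as the top $k$ degrees of $U_M$, so that both the index bound $i_t\ge t$ on $V_M$ and the bound $j_t\le 2\ell+t$ on $U_M$ push the two sides in the favorable direction — is natural once \eqref{eq:max3} and \eqref{eq:max-matchingGR} are compared side by side. The only modest subtlety is checking the compatibility of quantifier ranges, which holds since $k\le n-2\ell$ is exactly what is needed both to select a $k$-element subset of $U_M$ and to invoke the failure of the $\ell^*$-defining inequality for $\ell<\ell^*$.
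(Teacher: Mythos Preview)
Your proof is correct and follows essentially the same strategy as the paper: apply the Gale--Ryser-type inequality \eqref{eq:max-matchingGR} and then argue that the inequality persists when $V_M$ is replaced by the top $2\ell$ degrees and $U$ by the next $k$ degrees. The paper phrases this reduction as a swap argument (``if $d(v)\le d(u)$ for $v\in V_M$, $u\in U_M$, swap them and the inequality still holds''), whereas you carry it out via explicit index bounds $i_t\ge t$ and $j_t\le 2\ell+t$; your version is in fact more fully justified, since the paper leaves the validity of the swap step to the reader.
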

\begin{proof}
    By \Cref{lemma:max-matchingGR},~\cref{eq:max-matchingGR} holds. If $d(v)\le
    d(u)$ for $v\in V_M,u\in U_M$, then we can swap them between the LHS and RHS
    of~\eqref{eq:max-matchingGR}
    and the inequality will still hold. Thus, we have shown that $|M|\ge \ell^*$
    must hold.
\end{proof}
For $r$-regular graphs $|M|\ge
\ell^*=\max\left(\frac{r}{2},\frac{r}{2r-1}\cdot\frac{n}{2}\right)$.

\bigskip\noindent Now we turn our attention to \emph{maximum} matchings. Let one
of them be $M$.
\begin{lemma}
    Let $M$ be a maximum size matching in a graph $G$ with degree sequence $\d$. Then
    \begin{equation}\label{eq:exP3}
\sum_{uv \in M} \max\left(d(u)-1,d(v)-1\right)+\left|\left\{uv\in M\
		|\ d(u)=d(v)=2\right\}\right|\geq  \sum_{w \in U_{M}} d(w)
\end{equation}
\end{lemma}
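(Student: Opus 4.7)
The plan is to reduce the inequality to a local edge-counting argument per matched edge, using the no-augmenting-path property of maximum matchings. First I would observe that $U_M$ is an independent set (otherwise $M$ would not even be maximal), so
\begin{equation*}
    \sum_{w \in U_M} d(w) = |E(V_M, U_M)| = \sum_{uv \in M}\bigl|\{xw \in E(G) : x \in \{u,v\},\ w \in U_M\}\bigr|.
\end{equation*}
So it suffices to bound, for each $uv \in M$, the number of edges from $\{u,v\}$ to $U_M$ by $\max(d(u)-1,d(v)-1)$ plus the indicator $[d(u)=d(v)=2]$, and then sum over $M$.

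Fix $uv \in M$ and set $A = N(u) \cap U_M$ and $B = N(v) \cap U_M$. The key step is the following dichotomy forced by maximality: if there exist $a \in A$ and $b \in B$ with $a \neq b$, then $a\,u\,v\,b$ is an $M$-augmenting path, contradicting $M$ being maximum. Therefore either $A = \emptyset$, or $B = \emptyset$, or $A = B = \{w\}$ for a single vertex $w$.

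In the first two cases, the contribution of $uv$ to $|E(V_M,U_M)|$ equals $|B| \le d(v)-1$ or $|A| \le d(u)-1$ respectively, and so is at most $\max(d(u)-1,d(v)-1)$. In the third case the contribution is exactly $2$ (the edges $uw$ and $vw$), and since each of $u$ and $v$ has at least the matching neighbor and $w$, both have degree $\ge 2$. If $\max(d(u),d(v)) \ge 3$, then $\max(d(u)-1,d(v)-1) \ge 2$ absorbs the contribution; the only remaining subcase is $d(u)=d(v)=2$, where $\max(d(u)-1,d(v)-1)=1$ and the extra $+1$ from the indicator $[d(u)=d(v)=2]$ is exactly what is needed to reach $2$.

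Summing the per-edge bound over $uv \in M$ yields the claimed inequality. The only subtle step is the case analysis for the singleton case $A=B=\{w\}$ with small degrees; everything else is routine double counting. No further obstacle is expected, since the no-augmenting-path property is exactly what forces $A$ and $B$ not to be two disjoint nonempty sets.
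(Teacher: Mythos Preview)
Your proof is correct and follows essentially the same approach as the paper: both argue that for each $uv\in M$ the absence of an $M$-augmenting path of length three forces the edges from $\{u,v\}$ to $U_M$ either to emanate from a single endpoint or to form a pair $uw,vw$ to one common unmatched vertex, and then sum the resulting per-edge bound. Your write-up is slightly more explicit in the case analysis (separating $A=\emptyset$, $B=\emptyset$, and $A=B=\{w\}$), but the underlying idea and the bound $e(\{u,v\},U_M)\le \max\{d(u)-1,\,d(v)-1,\,\min(2,d(u)+d(v)-2)\}$ are identical.
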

\begin{proof}
For any $uv\in M$, if there exist two disjoint edges $e,f$ connecting
$V_M$ to $U_M$ that both intersect $uv$, then we may take $M-uv+e+f$ to obtain a
larger matching, contradicting that $M$ is a maximum matching. Therefore if both $u$
and $v$ have neighbors in $U_M$, then there is exactly 1 such neighbor in $U_M$.
In other words, the number of edges induced between $\{u,v\}$ and $U_M$ is
\begin{equation}\label{eq:uv}
	e(G[\{u,v\}, U_M])\le \max\big\{d(u)-1,\quad
    d(v)-1,\quad\min\left\{2,d(u)+d(v)-2\right\}\big\}.
\end{equation}
The $\max$ on the right hand side takes its value from the third argument
exclusively only if $d(u)=d(v)=2$. Summing inequality (\ref{eq:uv}) over every $uv\in M$ we
obtain inequality (\ref{eq:exP3}).
\end{proof}

Using inequality (\ref{eq:exP3}), we can strengthen \Cref{tm:maxbound} as follows:
\begin{theorem}[Lower bound on the matching number]\label{thm:noP3}
	For any graph $G$ with non-increasing degree sequence $\d$ we have
    \begin{equation}\label{eq:nu_exP3}
		\nu(G)\ge \min\left\{ k\ge 0\ \Big|\ \sum_{i=1}^k
		2d_i+\sum_{i=k+1}^{2k} d_i \geq 2m(G)\right\}.
	\end{equation}
\end{theorem}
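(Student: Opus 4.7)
The plan is to take a \emph{maximum} matching $M$ of size $k=\nu(G)$, apply the preceding lemma \eqref{eq:exP3} to produce a lower bound on $2m(G)$, and then rewrite that bound in terms of the $2k$ largest entries of the arranged degree sequence $\d$. The desired inequality \eqref{eq:nu_exP3} will fall out once we verify monotonicity of the bounding function in $k$.

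First, I would add $\sum_{v\in V_M} d(v) = \sum_{uv\in M}\bigl(d(u)+d(v)\bigr)$ to both sides of \eqref{eq:exP3}. The right side becomes $\sum_{v\in V} d(v) = 2m(G)$. Using $\max(d(u){-}1,d(v){-}1) = \max(d(u),d(v))-1$ on the left and the crude bound $\bigl|\{uv\in M : d(u)=d(v)=2\}\bigr| \le |M|$ to absorb the $-|M|$ coming from the $-1$'s, the left side is at most
\[
\sum_{v\in V_M} d(v) + \sum_{uv\in M}\max\bigl(d(u),d(v)\bigr).
\]

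Next, let $a_1 \ge a_2 \ge \cdots \ge a_{2k}$ denote the degrees of the vertices of $V_M$ sorted in decreasing order. A rearrangement argument shows that for any grouping of these $2k$ values into $k$ pairs the sum of the pairwise maxima is at most $a_1 + \cdots + a_k$ (the extremal pairing matches largest with smallest). Hence
\[
\sum_{v\in V_M} d(v) + \sum_{uv\in M}\max\bigl(d(u),d(v)\bigr) \le \sum_{i=1}^{2k} a_i + \sum_{i=1}^{k} a_i = 2\sum_{i=1}^{k} a_i + \sum_{i=k+1}^{2k} a_i.
\]
Because $V_M$ is a $2k$-subset of $V(G)$, the sorted degrees of $V_M$ satisfy $a_i \le d_i$ termwise for $i = 1,\ldots,2k$, which yields $2\sum_{i=1}^{k} d_i + \sum_{i=k+1}^{2k} d_i \ge 2m(G)$ with $k = \nu(G)$.

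To finish, I would note that the function $k \mapsto 2\sum_{i=1}^{k} d_i + \sum_{i=k+1}^{2k} d_i$ is non-decreasing in $k$, since its increment is $d_{k+1}+d_{2k+1}+d_{2k+2} \ge 0$ (with the convention $d_j=0$ for $j>n$). Thus the set of $k$ for which the inequality in \eqref{eq:nu_exP3} holds is upward closed, so $\nu(G)$ is at least its minimum element. The only genuinely delicate step is the rearrangement bound on $\sum_{uv\in M}\max(d(u),d(v))$: it is what lets us replace the mixed $\max$-and-$\min$ expression appearing in \eqref{eq:exP3} with a clean sum over the top $2k$ entries of $\d$, and it is also what makes the cheap absorption of the $|\{uv\in M : d(u)=d(v)=2\}|$ correction sufficient.
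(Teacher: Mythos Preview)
Your proof is correct and follows the same approach as the paper: take a maximum matching, add $\sum_{v\in V_M} d(v)$ to both sides of \eqref{eq:exP3}, and rearrange. The paper compresses all of the work you spell out (bounding the $d(u)=d(v)=2$ term by $|M|$, the rearrangement inequality $\sum_{uv\in M}\max(d(u),d(v))\le a_1+\cdots+a_k$, the termwise comparison $a_i\le d_i$, and the monotonicity check) into the phrase ``usual algebraic manipulations,'' so your version is in fact more complete than the original.
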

\begin{proof}
    Let $M$ be a maximum matching in $G$, and let $k=|M|$. Adding $\sum_{w\in V_M} d(w)$ to both sides of inequality (\ref{eq:exP3}) and performing usual algebraic manipulations we get:
    \begin{equation}\label{eq:max2}
    	\sum_{i=1}^k 2d_i+\sum_{i=k+1}^{2k} d_i \geq 2m(G).
    \end{equation}
\end{proof}
Note, inequality (\ref{eq:nu_exP3}) is not always better than the
maximality-bound (\ref{eq:max}). For example, for the friendship graph we have $
k\ge (t+2)/3$. However, for the general windmill-graph $\mathit{Wd}(t,\ell)$ we
have
\begin{equation*}
k\ge \frac{t(\ell-2)+2}{3}=\frac{n-t+1}{3}
\end{equation*}
This is a factor of $\frac43$ larger than the maximality-bound on the same
degree sequence.

\medskip

For the half-graph for an even $n$, we have
\begin{equation*}
   	6kn-5k^2-3k\ge n^2 \quad \Rightarrow \quad \min k\simeq \frac{n}{5}.
\end{equation*}
Finally, for an $\ell$-regular graph $G$, we have $\nu(G)\ge \frac13n$. The
lower bound is sharp for disjoint union of triangles, and it is a factor of
$\frac43$ better than what we obtained from \Cref{tm:maxbound}. In comparison,
\cite[Theorem~B]{henning_tight_2018} proves $\nu(G)\ge\frac{4n-1}{9}$ for any
connected $3$-regular graph $G$ on $n$ vertices. Of course, for disconnected
graphs, the matching number can be lower: for any $5$-divisible $n$, we have
$\nu\left(\frac{n}{5}\times K_5\right)=\frac{2n}{5}$.

\section{An open problem: minimum maximum matching}\label{sec:conclusion}

We finish this paper with an open problem. A graph $G$ can have several maximal matchings. Let us denote the smallest size among the maximal matchings with $\bar{\nu}(G)$ and
thus $\bar{\nu}(G) \leq |M| \leq \nu(G)$ for any maximal matching $M$.  In
general, it was showed in~\cite{yannakakis_edge_1980} that to determine the
value $\bar{\nu}(G)$ is NP-hard. The same is true for several restricted graph
classes. Since $2\bar{\nu}(G)\ge \nu(G)$ therefore a 2-approximation is trivial.
However, Chleb\'\i k and Chleb\'\i kov\'a showed (\cite{chlebik_approximation_2006}), that a $7/6$ approximation algorithm  for the value $\bar{\nu}(G)$ is also NP-hard. Let $\bar\nu(\d)$ denote the minimum possible $\bar\nu(G)$ for all possible realizations of the graphic degree sequence $\d$. It seems to be an interesting question is whether $\bar\nu(\d) = \ell^*(\d)$ (see inequality~\ref{eq:max3}).

\medskip

For any positive integer $t$, let us consider the following degree sequence:
$\mathbf{h}=({(2t-1)}^{2t},1^{2t(2t-1)})$ (the number of vertices is $n=4t^2$). Then~\eqref{eq:max3} gives
\begin{equation*}
    \bar\nu(\mathbf{h})\ge \ell^*(\mathbf{h})=2t
\end{equation*}
Let $G_1=K_{2t}+t(2t-1)\times K_2$ and $G_2=2t\times K_{1,2t-1}$, both realizations of $\mathbf{h}$. We have
$\nu(G_1)=2t^2$ and $\nu(G_2)=2t$, so $\ell^*(\mathbf{h})$ is indeed equal to
$\bar\nu(\mathbf{h})$, even though $\mathbf{h}$ is potentially perfectly
matchable. We also conjecture that
\begin{equation}\label{conj:starbar}
\nu^*(\d)\le\frac12{\bar\nu(\d)}^2
\end{equation}
holds for any degree sequence $\d$. The degree sequence $\mathbf{h}$ shows
that~\eqref{conj:starbar} is potentially sharp.

\bibliographystyle{plain}

\section*{Appendix}
In this section, for sake of completeness, we provide the proof of Theorem \ref{thm:nu_star_exact}.

\begin{theorem*}
    For any graphic (non-zero) non-increasing degree sequence $\d$
    \begin{equation}\label{eq:potentially1}
        \begin{split}
            \nu^*(\d)=\max\Bigg\{\frac12\delta\in \mathbb{N}\ \Big|\ &\sum_{i=1}^k d_i\le k^2
                +\sum_{i=k+1}^n \min\{d_i-I_{i\le \delta},k\}\ \forall 1\le
                k<\frac12\delta\text{\ and}\\ & \sum_{i=1}^k
                d_i+|\{i>\delta\ |\ d_i=d_{\delta}\}|\le k^2+\\
        & +\sum_{i=k+1}^n \min\{ d_i-I_{d_i=d_{\delta}},k\}\text{\ for
        }k=\delta+t_\d(\delta) \Bigg\}
        \end{split}
    \end{equation}
    where $I_{X}=1$ if $X$ is true, otherwise $I_{X}=0$.
\end{theorem*}
\begin{proof}
    By \Cref{tm:M}, $\nu^*(\d)$ is equal to one half of the largest even number $\delta$ for which the
    reduced degree
    sequence~$\d'=(d_1-1,\ldots,d_{\delta}-1,d_{\delta+1},\ldots,d_n)$ is
    graphic. Let $\d''$ denote the non-increasing version of~$\d'$.

    Observe that if $d_{\delta}>d_{\delta+1}$, then $\d''=\d'$ and any jump
    locus of $\d''$ is a jump locus of $\d$.

    If $d_{\delta}=d_{\delta+1}$, then $d'_{\delta}=d'_{\delta+1}-1$ and $\d''$ can be obtained from $\d'$ by
    transposing two contiguous blocks of degrees equal to $d_{\delta}$ and
    $d_{\delta}-1$, respectively.
    Let $k_1$ be the largest integer such that $d_{k_1+1}=d_\delta$. Let $k_2$ be the largest integer such that
    $d_{k_2}=d_\delta$. Then $k_1+(k_2-\delta)$ is a possible jump locus of
    $\d''$. However, any other jump locus of $\d''$ is also a jump locus of $\d$. Note, that
    \begin{equation*}
        \delta+t_{\d}(\delta)=\delta+(k_2-\delta)-(\delta-k_1)=k_1+k_2-\delta.
    \end{equation*}
    Restating our previous observation, if $d''_k>d''_{k+1}$ and $d_k=d_{k+1}$,
    then we must have $k=\delta+t_\d(\delta)$.

    From \Cref{thm:EG} it follows that $\d''$ (and $\d'$) is graphic if and only
    if $\d''$ satisfies the Erdős-Gallai inequalities for $k=n$, $k=\delta+t_\d(\delta)$,
    and whenever $d_k>d_{k+1}$.

    For $k=\delta+t_\d(\delta)$, the Erdős-Gallai inequality for $\d''$ requires:
    \begin{equation}
        \sum_{i=1}^k d_i-k+(k_2-\delta)\le
        k(k-1)+\sum_{i=k+1}^n \min\{ d_i-I_{i\le k_2},k\}
            \quad\left(\text{for }k=\delta+t_\d(\delta)\right).\label{eq:tdk1}
    \end{equation}
    For $k\neq\delta+t_\d(\delta)$, if $k$ is a jump locus of $\d''$ (and $\d$) or
    $k=n$, we have to ensure that
    \begin{align}
        \sum_{i=1}^k d_i-k\le k(k-1)+\sum_{i=k+1}^\delta \min\{
            d_i-1,k\}+\sum_{i=\delta+1}^n\min\{d_i,k\} &\qquad\text{if }1\le
            k<\delta+t_\d(\delta),\label{eq:smallk1}\\
        \sum_{i=1}^k d_i-\delta\le k(k-1)+\sum_{i=k+1}^n \min\{d_i,k\} &
            \qquad\text{if }\delta+t_\d(\delta)< k\le n\label{eq:largek1}.
     \end{align}
     \Cref{eq:largek1} automatically follows from the Erdős-Gallai inequality
     for $\d$ and $k$. Thus the reduced degree sequence
     $\d'$ is graphic if and only if \cref{eq:smallk1,eq:tdk1} are satisfied.
     Inequality~\eqref{eq:smallk1} follows from the graphicality of $\d$ if
     \begin{equation}\label{eq:smallk_sufficient1}
         k\ge \sum_{i=k+1}^\delta \min\{
            d_i-1,k\}-\sum_{i=k+1}^\delta \min\{
            d_i,k\}\ge \left|\left\{ i\in\mathbb{N}\ |\ k+1\le i\le \delta\text{\ and }
         d_i\le k\right\} \right|.
     \end{equation}
     In particular, if $k\ge \frac12\delta$, then \cref{eq:smallk_sufficient1}
     holds. In turn, \cref{eq:smallk1} is
     automatically satisfied if $k\ge \frac12\delta$, which leads to \cref{eq:potentially1}.
\end{proof}

\end{document}